\theoremstyle{plain}
\newtheorem{thm}{Theorem}[section]
\newtheorem{prop}[thm]{Proposition}
\newtheorem{cor}[thm]{Corollary}
\theoremstyle{definition}
\theoremstyle{remark}
\numberwithin{equation}{section}
\newcommand{\Z}{\mathbb{Z}}
\newcommand{\Q}{\mathbb{Q}}
\newcommand{\R}{\mathbb{R}}
\newcommand{\C}{\mathbb{C}}
\renewcommand{\phi}{\varphi}
\DeclareMathSymbol{\sdp}{\mathbin}{AMSb}{"6F}
\newcommand{\ignore}[1]{}
\begin{document}
\baselineskip=16pt

\begin{abstract}
We produce a formula for the $\Z_2$-Betti numbers of the moduli space  $M_r^d$ of stable real Higgs bundles over a real projective curve, with coprime rank $r$ and degree $d$.  Our approach relies on the motivic formula for the moduli space due to Mellit \cite{M}, Fedorov-Soibelman-Soibelman \cite{FSS}, and Schiffmann \cite{S}, and the fact that the virtual $\Z_2$ Poincar\'e polynomial is a motivic measure over $\R$.
\end{abstract}

\title{Betti numbers of the moduli space of Higgs bundles over a real curve}
\address{ 	Department of Mathematics and Statistics,
Memorial University of Newfoundland,
St. John's, NL,
Canada,
A1C 5S7,
tbaird@mun.ca}
\author{Thomas John Baird}
\keywords{Moduli spaces of Higgs bundles, symmetric products of a curve, real algebraic geometry, motivic ring, Poincar\'e polynomials, Plethystic exponential}

\maketitle

\section{Introduction}

Let $\Sigma/k$ be a smooth, projective, geometrically connected curve of genus $g$ over a field $k$ admitting a $k$-rational point, and denote by $M_r^d$ the moduli space of stable Higgs bundles of rank $r$ and degree $d$ over $\Sigma$. When $gcd(d,r)=1$, which we assume henceforth,  $M_r^d$ is a smooth, quasi-projective variety.  

For $g,r,d \in \Z$ and $g, r \geq 0$, Schiffmann \cite{S} defines a Laurent polynomial
$$  A_{g,r,d} (q, \alpha_1,..., \alpha_g)  \in \Z[q, \alpha_1^{\pm 1},..., \alpha_g^{\pm 1} ]$$
which is symmetric under permutations of the $\alpha_i$ and under $\alpha_i \mapsto q \alpha_i^{-1}$. If $k = \mathbb{F}_q$ and $\{ \alpha_i, \alpha_i^{-1}q|i=1,..,g\}$ are set to the eigenvalues of the Frobenius action on $H^1_{et}(\Sigma, \Q_l)$ for $(q,l)=1$, then $A_{g,r,d}$ counts indecomposable bundles of rank $r$ and degree $d$ over $\Sigma$,  and  $q^{1+(g-1)r^2}A_{g,r,d}$ counts points in $M_r^d(\mathbb{F}_q)$. Using the Artin-Grothendieck Comparision Theorem, it follows that that when $k = \C$,  the Poincar\'e polynomial for the compactly supported rational Betti numbers of $M_r^d(\C)$ equals

\begin{equation}\label{Schiffresult}
   \sum_k  (-1)^kt^k \dim_\Q  H^k_{cpt}(M_r^d(\C);\Q) = t^{2(g-1)r^2+2}A_{g,r,d} (t^2, t,t,t,...,t,t).
   \end{equation}

Our main result is that a similar formula calculates $\Z_2$ Betti numbers of $M_r^d(\R)$ when $k =\R$. For a variety $X/\R$, denote its compactly supported Poincar\'e polynomial by
 $$P_t( X) :=   \sum_k  (-1)^k t^k \dim_{\Z_2} H^k_{cpt}(M_r^d(\R); \Z_2).$$

\begin{thm}\label{mainthm}
Suppose that $\Sigma$ is a smooth, projective, geometrically connected curve of genus $g$ defined over $k=\R$, and suppose that $\Sigma(\R)$ is a union of $b+1$ circles, with $b\geq 0$. Then 
$$P_t(M_r^d)  =  (-t)^{(g-1)r^2+1} A_{g,r,d} (-t, -t^{\frac{1}{2}}, -t^{\frac{1}{2}},  ..., -t^{\frac{1}{2}}, -1,-1,..,-1) $$ 
where  $(g-b)$-many entries equal $-t^{\frac{1}{2}}$ and $b$-many equal $-1$. 
\end{thm}

For example, for $r=2$, we obtain the explicit formula $$P_{t}(M_2^1) =  2^b t^{4(g-1)+1}(1-t)^g f(t^{1/2},1)$$ where

\begin{equation*}
\begin{split}
f(t^{\frac{1}{2}},z) &=  \frac{(z^2+t^{\frac{1}{2}})^{g-b} (z-t^{\frac{1}{2}})^{g-b} (z^2+1)^b(z-t)^b}{(z^2-1)(z+t)} + \frac{(z-t^{\frac{3}{2}})^{g-b} (1+t^{\frac{3}{2}})^{g-b} (z-t)^b(1+t^2)^b}{(z+t)(1-t^2)}\\
  &- \frac{(z+t^{\frac{1}{2}})^{g-b} (1-t^{\frac{1}{2}})^{g-b} (z+1)^b(1-t)^b}{2(z-1)(1+t)} - \frac{(z-t^{\frac{1}{2}})^{g-b} (1+t^{\frac{1}{2}})^{g-b} (z-1)^b(1+t)^b}{2(z+1)(1-t)}. 
\end{split}
\end{equation*}
A different formula for $P_t(M_2^1)$ was obtained in \cite{B}.  Formulas for $P_t(M_r^d)$ when $r\geq 3$ are new. 

We use a construction of $A_{g,r,d}$ due to Mellit \cite{M}.  Let $\mathcal{P}$ denote the set of Young diagrams.  Given $\mu \in \mathcal{P}$, define the rational function $\mathcal{H}_\mu$ in variables $ q, z, \alpha_1,..., \alpha_g$,

$$\mathcal{H}_\mu : =  \prod_{\square \in \mu}  \frac{\prod_{i=1}^g (z^{a(\square)+1} - \alpha_i q^{l(\square)})(z^{a(\square)} - \alpha_i^{-1} q^{l(\square)+1})}{(z^{a(\square)+1} - q^{l(\square)})(z^{a(\square)} - q^{l(\square)+1})}$$
where $a(\square)$ and $l(\square)$ denote the arm and leg lengths of boxes in the diagram.  For $g \geq 1$, define rational functions $H_{g,r}$ by
\begin{equation} \label{Plethlog}
 \sum_{r=1}^\infty H_{g,r} T^r = (z-1)(1-q) \operatorname{Log} \sum_{\mu \in \mathcal{P}} T^{|\mu|} \mathcal{H}_\mu, 
\end{equation}
where $\operatorname{Log}$ denotes the plethystic logarithm.  Mellit proved that for all $r \geq 1$, $H_{g,r}$ is a Laurent polynomial in $q$, $z$, and $\alpha_1, \ldots, \alpha_g$, and for all $d$,
\[
A_{g,r,d}(q, \alpha_1, \ldots, \alpha_g) = H_{g,r}(q, 1, \alpha_1, \ldots, \alpha_g).
\]

Note that $A_{g,r,d}$ is independent of $d$, so we denote it $A_{g,r}$ henceforth.   When applied to $k=\C$, Mellit's result proved a conjectural formula of Hausel and Rodriguez-Villegas for the $\Q$-Betti numbers of $M_r^d$.   Theorem \ref{mainthm} can be thought of as a Hausel Rodriguez-Villegas style formula for the $\Z_2$-Betti number of the moduli space of real Higgs bundles over a real curve.

Our proof of Theorem \ref{mainthm} can be outlined as follows.  Fedorov-Soibelman-Soibelman  \cite{FSS} proved a motivic version of Schiffmann's formula which, when $k$ is a field of characteristic zero, identifies the class $[M_r^d]$ in $\overline{Mot}(k)$, the dimensional completion of the Grothendieck ring of Artin stacks over $k$.  Mellit (\cite{M} \S 6), identifies their class $[M_r^d]$ as the image of $q^{(g-1)r^2+1} A_{r,d}$  under a homomorphism 

\begin{equation}\label{evsigma}
 ev_\Sigma  :  R_g  \rightarrow \overline{Mot}(k),
 \end{equation} where $R_g$ is the ring of Laurent polynomials in $q, \alpha_1^{\pm 1},...,\alpha_g^{\pm 1}$ which are invariant under permutations of $\alpha_i$ and under $\alpha_i \mapsto q \alpha_i^{-1}$.  This $ev_\Sigma$ sends $q$ to the Lefschetz class $\mathbb{L}$ and the extension to $R_g [[z]] \rightarrow \overline{Mot}(k)[[z]]$ sends the formal zeta function of the curve to the motivic zeta function of $\Sigma$ :
 
\begin{equation}\label{evfor}
 ev_\Sigma \left( \frac{ \prod_{i=1}^g (1-\alpha_i z)(1-\alpha_i^{-1}qz)}{(1-z)(1-qz)}\right) = \sum_{n=0}^{\infty} z^n  \Sigma^{(n)} = Z_\Sigma (z).
 \end{equation}

When $k= \R$, one can define a virtual Poincar\'e polynomial homomorphism $$P_t^{vir}:  \overline{Mot}(\R) \rightarrow \Z((t^{-1})) ,$$ which sends the class $[X]$ of a smooth, projective variety $X$ to the Poincar\'e polynomial  $  P_t^{vir}([X]) = P_t(X(\R))$.   

To calculate $P_t^{vir}( [M_r^d])$, it only remains  to determine the composition $P_t^{vir} \circ ev_\Sigma$, which boils down to calculating $P_t^{vir} ( Z_\Sigma(z))$. We carry this out in \S \ref{Betti numbers of}.  Then in \S \ref{On purity of real} we prove that the virtual Poincar\'e polynomial $M_r^d$ agrees with its topological Poincar\'e polynomial, completing the proof of Theorem \ref{mainthm}.  In section \ref{Average Betti numbers} we show that the Poincar\'e polynomial $P_t(M_r^d)$ is divisible by $P_t(Pic^0(\Sigma))$, which has the surprising implication that average value of the Poincar\'e polynomial of the connected components of $M_r^d$ is a polynomial with integer coefficients.

\section*{Acknowledgements}
Thank you to Oscar Garcia-Prada and to Ajneet Dhillon for suggesting that I use motivic formulas to calculate Betti numbers. Thanks also to Oscar Garcia-Prada and Jochen Heinloth who shared Maple code to calculate $[M^d_4]$ in terms of the motivic formula derived in \cite{GHS}.  Indeed the original plan for the current paper was to work from \cite{GHS},  and though their formula was later superseded by \cite{FSS}, it remained very helpful for confirming calculations.

\section{The Grothendieck Ring of Stacks}\label{The Grothendieck Ring}

We denote by $K_0(Var_k)$ the Grothendieck ring of varieties over the field $k$. This is the free abelian group generated by isomorphism classes $[X]$ of quasi-projective varieties over $X/k$ modulo the relation $[X] = [X\setminus Y] + [Y]$ where $Y\subseteq X$ is a closed subvariety. Multiplication is determined by the rule $[X] \cdot [Y] = [X\times_k Y]$.  Denote by $\mathbb{L}:= [\mathbb{A}^1] \in K_0(Var_k)$, called the Lefschetz element. 

Similarly, denote by $Mot(k)$ the Grothendieck ring of Artin stacks over $k$.  This is the free abelian group generated by isomorphism classes $[ \mathcal{X}]$ of algebraic stacks $\mathcal{X}/k$ of finite type with affine stabilizers,  modulo the relation $[\mathcal{X}] = [\mathcal{X} \setminus \mathcal{Y}] + [\mathcal{Y}]$ if $\mathcal{Y}$ is a closed substack  of $\mathcal{X}$ and the relation  $ [ \mathcal{E}] = [\mathcal{X}] [ \mathbb{A}^n]$ if $\mathcal{E}\rightarrow \mathcal{X}$ is a vector bundle of rank $n$ (this second relation is redundant for $K_0(Var_k)$). Multiplication is determined by $[\mathcal{X}] [\mathcal{Y}] = [\mathcal{X} \times \mathcal{Y}]$. 

There is a natural homomorphism $K_0(Var_k) \rightarrow Mot(k)$  sending $[X]$ to $[X]$. This descends to an isomorphism $ K_0'(Var_k) \cong Mot(k)$ where $K_0'(Var_k)$ is the localization of $K_0(Var_k)$ obtained by inverting $\mathbb{L}$ and $(\mathbb{L}^n-1)$ for all $n =1,2,3,...$ (\cite{E} Thm 1.2). 

We have a filtration $F^1 \subset F^2 \subset ... \subset  Mot(k)$ where $F^n$ is the subgroup generated by classes $[\mathcal{X}]$ with $\dim(\mathcal{X}) \leq -n$.  The dimensional completion $\overline{Mot}(k)$ is the completion of $Mot(k)$ under this filtration.

For a real projective variety $X/\R$, denote by $$P_t(X):=  \sum_{i=0}^{\infty} (-1)^i b_i(X) t^i$$  where $b_i(X) = \dim H^i_{cpt}(X(\R);\Z_2)$ is the dimension of the $i$th degree compactly supported cohomology with $\Z_2$ coefficients of $X(\R)$ endowed with analytic topology.  

\begin{prop}
There exists a unique motivic measure  $P_t^{vir}: K_0(Var_\R) \rightarrow \Z[t]$  such that for all smooth projective $X/\R$ we have $P_t^{vir}([X]) = P_t(X)$. This extends naturally to a ring homomorphism
$$P_t^{vir}: \overline{Mot}(\R) \rightarrow \Z[t] [[t^{-1}]] = \Z((t^{-1})).$$
\end{prop}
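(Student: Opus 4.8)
The plan is to construct $P_t^{vir}$ on $K_0(Var_\R)$ from its smooth projective generators and then carry it through the localization and dimensional completion. In characteristic zero, $K_0(Var_\R)$ is generated as a group by the classes $[X]$ of smooth projective $\R$-varieties, with relations generated by $[\emptyset]=0$ and the blow-up relation $[\widetilde{X}]-[E]=[X]-[Y]$ for $Y\subset X$ a smooth closed subvariety of a smooth projective $X$, where $\widetilde{X}=\mathrm{Bl}_Y X$ and $E$ is the exceptional divisor (the presentation of Bittner, which rests on the weak factorization theorem), and the ring structure is the evident one. So to define a motivic measure $P_t^{vir}\colon K_0(Var_\R)\to\Z[t]$ with $P_t^{vir}([X])=P_t(X)$ for smooth projective $X$, it suffices to check that $X\mapsto P_t(X)$, on smooth projective varieties, is multiplicative with $P_t(\mathrm{pt})=1$, vanishes on $\emptyset$, and satisfies the blow-up relation. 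For $X$ smooth projective the real locus $X(\R)$ is empty or a closed manifold, so $H^{*}_{cpt}(X(\R);\Z_2)=H^{*}(X(\R);\Z_2)$; vanishing on $\emptyset$ is clear and multiplicativity is the K\"unneth formula over the field $\Z_2$. Uniqueness of $P_t^{vir}$ is then automatic, any two motivic measures agreeing with $P_t$ on the generators being equal.

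The content is the blow-up relation, which — together with the existence statement itself — is the theorem of McCrory and Parusi\'nski on virtual Betti numbers, their virtual Poincar\'e polynomial $\beta\colon K_0(Var_\R)\to\Z[u]$ specializing to $P_t^{vir}$ via $u\mapsto -t$. It is the $\Z_2$-coefficient analogue of the classical blow-up formula and can also be checked directly. Let $c=\mathrm{codim}_X Y$. On real points $\widetilde{X}(\R)\setminus E(\R)\cong X(\R)\setminus Y(\R)$, and $\pi\colon E(\R)=\P(N_{Y/X})(\R)\to Y(\R)$ is an $\R\P^{c-1}$-bundle; since $H^{*}(\R\P^{c-1};\Z_2)$ is generated by its degree-one class, the powers $1,w,\dots,w^{c-1}$ of $w=w_1(\mathcal{O}_E(1))$ form a Leray--Hirsch basis, giving $P_t(E)=P_t(Y)\bigl(1+(-t)+\dots+(-t)^{c-1}\bigr)$. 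Comparing the $\Z_2$-cohomology Gysin sequences of the codimension-one inclusion $E(\R)\subset\widetilde{X}(\R)$ and of the codimension-$c$ inclusion $Y(\R)\subset X(\R)$ over their common term $H^{*}(X(\R)\setminus Y(\R);\Z_2)$ — such Gysin sequences exist over $\Z_2$ with no orientability hypothesis — yields, exactly as in the complex case, $H^{i}(\widetilde{X}(\R);\Z_2)\cong H^{i}(X(\R);\Z_2)\oplus\bigoplus_{j=1}^{c-1}H^{i-j}(Y(\R);\Z_2)$, i.e. $P_t(\widetilde{X})=P_t(X)+P_t(Y)\bigl((-t)+\dots+(-t)^{c-1}\bigr)$. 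Subtracting this from the projective bundle formula gives $P_t(\widetilde{X})-P_t(E)=P_t(X)-P_t(Y)$, as needed.

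For the extension, $\mathbb{L}=[\A^1]\mapsto P_t(\P^1)-P_t(\mathrm{pt})=(1-t)-1=-t$, hence $\mathbb{L}^{n}-1\mapsto(-t)^{n}-1$, a unit in $\Z((t^{-1}))$ for every $n\ge 1$. By the universal property of localization, $P_t^{vir}$ factors through $K_0'(Var_\R)\cong Mot(\R)$ with image in $\Z[t,(-t)^{-1},((-t)^{n}-1)^{-1}]\subset\Z((t^{-1}))$. A class of dimension $\le -n$ in $Mot(\R)$ is sent into $t^{-n}\Z[[t^{-1}]]$, since $\deg_t P_t(X(\R))\le\dim X$ for smooth projective $X$ while $\mathbb{L}^{-1}$ and $(\mathbb{L}^{m}-1)^{-1}$ strictly lower the degree; thus $P_t^{vir}$ carries the dimension filtration $F^{\bullet}$ into the degree filtration on $\Z((t^{-1}))$. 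As $\Z((t^{-1}))$ is complete for the latter, $P_t^{vir}$ extends uniquely along $\overline{Mot}(\R)=\varprojlim Mot(\R)/F^{n}$ to a ring homomorphism $\overline{Mot}(\R)\to\Z((t^{-1}))$.

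The single genuine obstacle is the blow-up relation; the rest is formal bookkeeping. The detour through Bittner's presentation is necessary because the naive signed polynomial $X\mapsto\sum_i(-1)^i\dim_{\Z_2}H^{i}_{cpt}(X(\R);\Z_2)\,t^i$ is \emph{not} additive for general $X$: for example it equals $-2t$ on $\mathbb{G}_{m,\R}$, whereas $P_t^{vir}(\mathbb{G}_m)=\mathbb{L}-1\mapsto -t-1$. So $P_t^{vir}$ must be constructed from Bittner's relations (equivalently, from McCrory--Parusi\'nski's weight filtration on $\Z_2$ Borel--Moore homology), not simply read off degree by degree.
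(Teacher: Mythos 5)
Your proposal is correct, but it is more self-contained than the paper, whose entire existence argument is a citation: the paper invokes McCrory--Parusinski for the motivic measure $P_t^{vir}$ on $K_0(Var_\R)$ and then runs exactly the extension you describe, noting that $P_t^{vir}(\mathbb{L})=-t$ and $P_t^{vir}(\mathbb{L}^n-1)=(-t)^n-1$ are units in $\Z((t^{-1}))$, using Ekedahl's isomorphism $K_0'(Var_\R)\cong Mot(\R)$, and passing to the dimensional completion. What you do differently is re-derive the cited result via Bittner's presentation plus the real blow-up formula, which is in fact how McCrory--Parusinski themselves argue, so your route buys transparency (and makes the uniqueness statement visibly trivial) at the cost of redoing their work. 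The one step you wave through is the splitting $H^{i}(\widetilde{X}(\R);\Z_2)\cong H^{i}(X(\R);\Z_2)\oplus\bigoplus_{j=1}^{c-1}H^{i-j}(Y(\R);\Z_2)$ ``exactly as in the complex case'': the complex argument uses injectivity of $\pi^*$, which here holds because $\pi\colon\widetilde{X}(\R)\to X(\R)$ has $\Z_2$-degree one so $\pi_*\pi^*=\mathrm{id}$; alternatively the weaker additive identity $P_t(\widetilde{X})+P_t(Y)=P_t(X)+P_t(E)$, which is all Bittner's relation needs, follows from the exact sequence of the blow-up square together with the Leray--Hirsch surjectivity you established. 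Similarly, the degree bound used for continuity on the filtration $F^{\bullet}$ must be known for arbitrary, not just smooth projective, varieties, but this follows from the same presentation by induction. With those two points made explicit, your argument fully recovers the proposition.
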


\begin{proof}
The homomorphism  $$P_t^{vir}: K_0(Var_\R) \rightarrow \Z[t] \subset \Z((t^{-1}))$$  is constructed by McCrory and Parusinski \cite{MP}.   Since $P_t^{vir}(\mathbb{L} ) = -t$   and $P_t^{vir}(\mathbb{L}^n -1)) = (-t)^n-1$ are both invertible in $\Z((t^{-1}))$, with $((-t)^n-1)^{-1} = (-t)^{-n} (\sum_{k=0}^{\infty} (-t)^{-kn})$, this extends to a homomorphism  $$P_t^{vir}: Mot(\R) \rightarrow   \Z((t^{-1})),$$  
which extends naturally to the dimensional completion.
\end{proof}

The zeta function of a variety $X$ is given by the formal power series
$$Z_X(z)  := \sum_{n=0}^\infty  [X^{(n)} ]z^n  \in  Mot(k) [[z]],  $$
where $X^{(n)}  = X^n/S_n $ denotes the $n$-fold symmetric product of $X$.  The zeta function is multiplicative in the sense that for closed $Y \subseteq X$,  $$ Z(X, z) = Z(Y,z) Z(X\setminus Y, z).$$

If $\Sigma/k$ is a smooth projective curve of genus $g$ and $\Sigma(k) \neq \emptyset$, then a theorem of Kapranov (\cite{K}  Theorem 1.1.9) says that

$$ Z_\Sigma(z) =  \frac{P(z)}{(1-z)(1-\mathbb{L}z)} $$ 
where $P(z)$ is a polynomial of degree $2g$ such that  $Z_\Sigma ( 1/\mathbb{L}z) = \mathbb{L}^{1-g} x^{2-2g} Z_\Sigma(z)$. The homomorphism (\ref{evsigma}) is determined by the property that 
$ \prod_{i=1}^g (1-\alpha_i z)(1-\alpha_i^{-1}qz)$ is sent to $P(z)$ in some algebraic extension of $\overline{Mot}(k)$

\section{Betti numbers of $\Sigma^{(n)}(\R)$}\label{Betti numbers of}

Suppose that $\Sigma$ is a smooth, geometrically connected, projective curve of genus $g$ over $\R$. For $n \geq 0$ denote by $\Sigma^{(n)}$ its $n$-th symmetric product. The set of complex points $\Sigma^{(n)}(\C)$ can be identified with the orbit space $$ Sym^n( \Sigma(\C)) := \Sigma(\C)^n/ S_n, $$  while $\Sigma^{(n)}(\R) \subset \Sigma^{(n)}(\C)$ can be identified with $Gal(\C/\R)$ fixed points. 

If $\Sigma(\R)$ is empty, then we have homeomorphisms
$$ \Sigma^{(n)}(\R) \cong  \begin{cases}  \emptyset  &  \text{if $n$ is odd} \\  Sym^{n/2}(N_g) & \text{if $n$ is even} \end{cases}$$
where $N_g$ is a non-orientable surface double covered by $\Sigma(\C)$.  We abusively write $ P_t(X)  :=  \sum_{k}  (-t)^k \dim H_{cpt}^k(X;\Z_2)$.  By Macdonald's formula \cite{Mac},
\begin{equation}\label{MacDs}
 \sum_{m=0}^\infty P_t( Sym^{m}(N_g)) z^m = \frac{ (1-tz)^{g+1} }{(1-z)(1-t^2z)},
 \end{equation}
 so

$$ P_t^{vir}(Z_\Sigma (z)) := \sum_{n=0}^{\infty} P_t ( \Sigma^{(n)} ) z^n =  \frac{ (1-tz^2)^{g+1} }{(1-z^2)(1-t^2z^2)}.$$

When $\Sigma(\R) \neq \emptyset$, $P_t(\Sigma^{(n)})$ was calculated in \cite{B}.  

\begin{prop}
If $\Sigma(\R)$ is non-empty, with $b+1$ many connected components, then
\begin{equation}\label{zetahom}
 P_t^{vir}(Z_\Sigma (z)) =   \sum_{n=0}^{\infty} P_t ( \Sigma^{(n)}) z^n   =  \frac{(1-tz^2)^{g-b} (1+z)^{b} (1-tz)^{b} }{(1-z)(1+tz)} .
 \end{equation}
\end{prop}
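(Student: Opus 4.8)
The plan is to reduce the computation of $\sum_n P_t(\Sigma^{(n)})z^n$ to a product of two pieces by using the multiplicativity of the zeta function together with the structure of a real curve whose real locus is nonempty. Recall that $P_t^{vir}$ is a ring homomorphism on $\overline{Mot}(\R)$, and that $Z_\Sigma(z) = \sum_n [\Sigma^{(n)}]z^n$ is multiplicative under closed decompositions. So it suffices to understand $P_t^{vir}$ applied to a zeta function, i.e. to compute $P_t(\Sigma^{(n)}(\R)) = P_t(\mathrm{Fix}_{Gal(\C/\R)}(\mathrm{Sym}^n\Sigma(\C)))$ in terms of topological data of $\Sigma$, namely $g$ and the number $b+1$ of circles in $\Sigma(\R)$.

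First I would identify $\Sigma^{(n)}(\R)$ topologically. A point of $\mathrm{Sym}^n\Sigma(\C)$ fixed by complex conjugation is an effective divisor of degree $n$ invariant under conjugation; such a divisor splits uniquely as a real part supported on $\Sigma(\R)$ (a point of $\mathrm{Sym}^j\Sigma(\R)$ for some $j \le n$ with $j \equiv n \bmod 2$, since the remaining $n-j$ points come in conjugate pairs) and a "conjugate-pair" part (a point of $\mathrm{Sym}^{(n-j)/2}$ of the quotient surface $\Sigma(\C)/\langle\text{conj}\rangle$, which is the orientable surface with boundary obtained by cutting, having $b+1$ boundary circles). This should give a stratification, but it is cleaner to argue that the class $[\Sigma^{(n)}]$, or rather its image under $P_t^{vir}$, is a convolution of the contributions of the individual circles in $\Sigma(\R)$ and a "pairs" contribution governed by the genus. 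Concretely, I expect $Z_\Sigma$ to factor (after applying $P_t^{vir}$, as a power series in $z$) as the product over the $b+1$ circles of the generating function for symmetric products of a circle, times a factor for the genus-$g$ worth of conjugate pairs — the latter being exactly the $N_g$-type factor $(1-tz^2)^{g-b}/((1-z^2)(1-t^2z^2))$ seen in the empty case, but with $g$ replaced by $g-b$ because each circle in $\Sigma(\R)$ "uses up" one unit of genus.

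The key computational inputs are: (i) Macdonald's formula, which gives $\sum_m P_t(\mathrm{Sym}^m S^1)z^m = \frac{1+z}{1-z}$ for a circle (equivalently, $\mathrm{Sym}^m S^1$ has the $\Z_2$-cohomology contributing $1 - t$ in the relevant normalization; note $\mathrm{Sym}^m S^1$ is homotopy equivalent to $S^1$ for $m\ge 1$), and more generally the formula (\ref{MacDs}) for nonorientable surfaces; and (ii) the analogous Macdonald-type formula for the orientable surface with $b+1$ boundary components that arises as the "pair" quotient. Multiplying the circle factor $\left(\frac{1+z}{1-z}\right)^{?}$ appropriately — with the caveat that pairs of points contribute $z^2$ — against the genus factor, and matching against the claimed answer $\frac{(1-tz^2)^{g-b}(1+z)^b(1-tz)^b}{(1-z)(1+tz)}$, pins down the exact bookkeeping. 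I would double-check consistency in two ways: setting $b=0$ must reproduce neither the empty case nor give a contradiction but rather the single-circle case, and the Euler-characteristic specialization $t\to 1$ should match $\sum_n \chi(\Sigma^{(n)}(\R))z^n$, which is independently computable.

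The main obstacle will be getting the topological identification of $\Sigma^{(n)}(\R)$ precise enough to compute its $\Z_2$-cohomology generating function, in particular handling how the real divisors on the several circles and the conjugate-pair divisors interact in $\mathrm{Sym}^n$ — this is a mapping-space/symmetric-product computation where one must be careful that the decomposition $\Sigma^{(n)}(\R) = \bigsqcup_j \mathrm{Sym}^j(\Sigma(\R)) \times \mathrm{Sym}^{(n-j)/2}(Y)$ (with $Y$ the pair-quotient surface with boundary) is at worst a stratification and compute compactly-supported cohomology additively over strata, or else cite the computation from \cite{B} directly. Since the proposition explicitly attributes the $\Sigma(\R)\neq\emptyset$ case of $P_t(\Sigma^{(n)})$ to \cite{B}, the cleanest route is: invoke the formula for $P_t(\Sigma^{(n)})$ from \cite{B}, assemble the generating function, and verify by a direct algebraic manipulation (partial fractions / product expansion) that it equals the claimed closed form; the only real work is then bookkeeping, and the substantive content is already in \cite{B} and in Macdonald's formula.
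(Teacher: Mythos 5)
Your ``cleanest route'' is in fact exactly the paper's proof: it quotes from \cite{B} that for each pair $k,s\geq 0$ with $k+2s=n$, $\Sigma^{(n)}(\R)$ has ${b+1 \choose k}$ connected components, each with Poincar\'e polynomial $(1-t)^k P_t(\mathrm{Sym}^s(N_{g-k}))$, then sums this against Macdonald's formula (\ref{MacDs}) and collapses the sum with the binomial theorem; the dividing case (where $\Sigma(\C)\setminus\Sigma(\R)$ is disconnected) is handled by \cite{B} Remark 3.14, which says the exceptional component has the same Poincar\'e polynomial. Since the decomposition is into connected components, additivity of Betti numbers is trivial, and the ``bookkeeping'' is a three-line computation. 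So the endpoint of your proposal is the right one, but you leave the substantive input unstated: you never record the component-wise formula from \cite{B}, which is the only nontrivial ingredient, and without it the generating function cannot be assembled.

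The route you actually sketch before that has concrete problems. First, compactly supported $\Z_2$-Betti numbers are \emph{not} additive over a stratification (the long exact sequence need not split), and your strata --- divisors with exactly $j$ real points --- are only semialgebraic subsets of $\Sigma^{(n)}(\R)$, not real points of closed subvarieties, so neither the motivic relation nor $P_t^{vir}$-additivity applies to them as stated; the paper's use of connected components sidesteps precisely this issue. Second, your numerics are off: with the paper's signed convention $P_t(S^1)=1-t$, the circle factor is $\frac{1-tz}{1-z}$, not $\frac{1+z}{1-z}$, and the hoped-for factorization ``$(b+1)$ circle factors times an $N_{g-b}$-type Macdonald factor'' does not reproduce the right-hand side: dividing the claimed formula by $\bigl(\frac{1-tz}{1-z}\bigr)^{b+1}$ leaves $\frac{(1-tz^2)^{g-b}(1-z^2)^{b}}{1-t^2z^2}$, which is not the Macdonald series of any closed surface, reflecting the fact that conjugate pairs can degenerate onto the real circles so the decomposition is not a product. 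Finally, ``matching against the claimed answer pins down the exact bookkeeping'' is circular as a proof strategy. The fix is simply to commit to your last paragraph: cite the precise formula of \cite{B} for each connected component and carry out the binomial-theorem summation.
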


\begin{proof}

Suppose first that $ \Sigma(\C) \setminus \Sigma(\R)$ is connected.  For each pair of integers $k,s \geq 0$ such that $k+2s = n$, $\Sigma^{(n)}(\R)$ has $ { b+1 \choose k}$ connected components with Poincar\'e polynomial isomorphic to $(1-t)^k P_t( Sym^s(N_{g-k}))$.  Therefore, setting $a=b+1$, we have 

\begin{eqnarray*}
\sum_{n=0}^{\infty}  P_t(\Sigma^{(n)}) z^m  &=&  \sum_{0 \leq k \leq a} \sum_{s \geq 0}   z^{k+2s} { a \choose k}  (1-t)^k  P_t(  Sym^s(N_{g-k}))\\
&\stackrel{(\ref{MacDs})}{=}&  \sum_{0 \leq k \leq a}   z^{k} { a \choose k}   (1-t)^k \frac{(1-tz^2)^{g-k+1}}{(1-z^2)(1-t^2z^2)}\\
&=& \left(  (z -zt) + (1-tz^2) \right)^a  \frac{(1-tz^2)^{g-a+1}}{(1-z^2)(1-t^2z^2)} \\
&=&  \frac{(1-tz^2)^{g-a+1} (1+z)^{a-1} (1-tz)^{a-1}}{(1-z)(1+tz)} \\
\end{eqnarray*}

In case $\Sigma(\C) \setminus \Sigma(\R)$ is not connected, the calculation is the same except that whenever $k=a$, the corresponding component has Poincar\'e polynomial $(1-t)^{b+1}P_t(Sym^s(\Sigma_{(g-b)/2})) $, where $\Sigma_g$ is an orientable surface of genus $g$. But this equals $(1-t)^{b+1} P_t( Sym^s( N_{g-b-1}))$  (\cite{B} Remark 3.14) so the same formula holds.
\end{proof}

\begin{cor}
With hypotheses of Theorem \ref{mainthm} we have

$$ P_t^{vir}( [M_r^d]) =  (-t)^{(g-1)r^2+1} A_{g,r}(  -t, -t^{\frac{1}{2}},...,-t^{\frac{1}{2}},-1,...,-1).$$
\end{cor}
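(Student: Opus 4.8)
The plan is to deduce the statement from Mellit's identification $[M_r^d] = ev_\Sigma\big(q^{(g-1)r^2+1}A_{g,r}\big)$ in $\overline{Mot}(\R)$ (\cite{M}, \S6) by pinning down the composite ring homomorphism $\rho := P_t^{vir}\circ ev_\Sigma\colon R_g \to \Z((t^{-1}))$. First I would record the two facts about $\rho$ that are immediate: since $ev_\Sigma(q) = \mathbb{L}$ we get $\rho(q) = P_t^{vir}(\mathbb{L}) = -t$; and since $ev_\Sigma$ sends the formal zeta numerator to the Kapranov polynomial $P(z)$, where $Z_\Sigma(z) = P(z)/\big((1-z)(1-\mathbb{L}z)\big)$, applying $P_t^{vir}$ coefficientwise to this identity in $\overline{Mot}(\R)[[z]]$ and substituting \eqref{zetahom} gives
\[
\rho\Big(\prod_{i=1}^g (1-\alpha_i z)(1-\alpha_i^{-1}q z)\Big) = P_t^{vir}\big(P(z)\big) = P_t^{vir}\big(Z_\Sigma(z)\big)\,(1-z)(1+tz) = (1-tz^2)^{g-b}(1+z)^b(1-tz)^b .
\]

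Next I would exhibit a second homomorphism realizing the same data. Let $\sigma\colon R_g \to \Z[t^{\pm 1}] \subset \Z((t^{-1}))$ be the restriction to $R_g$ of the evaluation of $\Z[q^{\pm 1},\alpha_1^{\pm 1},\ldots,\alpha_g^{\pm 1}]$ at $q = -t$ and $(\alpha_1,\ldots,\alpha_g) = (-t^{1/2},\ldots,-t^{1/2},-1,\ldots,-1)$ with $g-b$ entries equal to $-t^{1/2}$; this restriction is well defined and independent of the ordering and of replacing an entry $a$ by $q/a$, because the elements of $R_g$ are invariant under permutations of the $\alpha_i$ and under $\alpha_i \mapsto q\alpha_i^{-1}$. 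Then $\sigma(q) = -t$, and since the $\alpha_i = -t^{1/2}$ factors map to $(1+t^{1/2}z)(1-t^{1/2}z) = 1 - tz^2$ and the $\alpha_i = -1$ factors to $(1+z)(1-tz)$, we get $\sigma\big(\prod_i(1-\alpha_iz)(1-\alpha_i^{-1}qz)\big) = (1-tz^2)^{g-b}(1+z)^b(1-tz)^b$. Hence $\rho$ and $\sigma$ agree on $q^{\pm 1}$ and on every coefficient of $P(z)$.

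The key step is then to conclude $\rho = \sigma$, for which I would use the algebraic fact that $R_g$ is generated as a ring by $q^{\pm 1}$ together with the coefficients of $P(z) = \prod_{i=1}^g(1-\alpha_iz)(1-\alpha_i^{-1}qz)$: setting $u_i := \alpha_i + q\alpha_i^{-1}$ one has $P(z) = \prod_i(1 - u_i z + q z^2)$ and $R_g = \Z[q^{\pm 1},u_1,\ldots,u_g]^{S_g} = \Z[q^{\pm1}][e_1(u),\ldots,e_g(u)]$, with each $e_k(u)$ recovered from the coefficients of $P(z)$ over $\Z[q^{\pm1}]$ by triangular elimination. Since $\rho$ and $\sigma$ agree on this generating set, $\rho = \sigma$, and evaluating both at $q^{(g-1)r^2+1}A_{g,r} \in R_g$ yields
\[
P_t^{vir}([M_r^d]) = \rho\big(q^{(g-1)r^2+1}A_{g,r}\big) = (-t)^{(g-1)r^2+1}\,A_{g,r}\big(-t,-t^{1/2},\ldots,-t^{1/2},-1,\ldots,-1\big),
\]
as claimed. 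I expect this last step to be the only delicate one: one must check that the ``free'' data --- the value of $\rho$ on $q$ and on the zeta numerator --- really does determine $\rho$ on all of $R_g$ (i.e.\ the generation statement above), and note that no half-integer powers of $t$ persist in $\rho(R_g)$, since they cancel in every invariant combination, consistently with $P_t^{vir}(P(z)) \in \Z[t][z]$. The rest is a direct computation from \eqref{zetahom}.
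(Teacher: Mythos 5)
Your proposal is correct and follows essentially the same route as the paper: substitute $q=-t$, $\alpha_i=-t^{1/2}$ ($g-b$ times) and $\alpha_i=-1$ ($b$ times) into the formal zeta function, observe that this reproduces \eqref{zetahom}, and combine \eqref{evfor} with Mellit's identification $[M_r^d]=ev_\Sigma(q^{(g-1)r^2+1}A_{g,r})$. The only difference is that you make explicit the step the paper leaves implicit in saying $ev_\Sigma$ is ``determined by'' \eqref{evfor}, namely that $R_g$ is generated over $\Z[q^{\pm1}]$ by the coefficients of $\prod_i(1-\alpha_iz)(1-\alpha_i^{-1}qz)$ (equivalently by $e_k(u)$ with $u_i=\alpha_i+q\alpha_i^{-1}$), so agreement there forces $P_t^{vir}\circ ev_\Sigma$ to coincide with the evaluation map --- a worthwhile, correct elaboration rather than a different proof.
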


\begin{proof}
Substituting  $q = -t$,  $\alpha_1=...= \alpha_{g-b} = -t^{\frac{1}{2}}$ and $\alpha_{g-b+1}= ... =\alpha_g = -1$ into the formal zeta function $$\frac{ \prod_{i=1}^g (1-\alpha_i z)(1-\alpha_i^{-1}qz)}{(1-z)(1-qz)} $$
yields (\ref{zetahom}). Making these same substitutions into $A_{g,r}$ yields $P_t^{vir}( [M_r^d])$.
\end{proof}

\section{On purity of real semi-projective varieties}\label{On purity of real}

A semi-projective variety $Y$ over $\C$ is a smooth, quasiprojective variety equipped with $\C^\times$-action such that the fixed point set $Y^{\C^\times}$ is proper and for ever $y \in Y$ the limit $lim_{\lambda \rightarrow 0} \lambda y$ exists as $\lambda \in \C^\times$ tends to zero.  Mixed Hodge structures of such varieties were studied in Hausel and Rodriguez-Villegas \cite{HRV13} .

A semi-projective variety $X$ over $\R$ is a real variety equipped with an action of $\R^\times$ such that the base change  $X \times_R \C$ is semi-projective over $\C$.  When $\Sigma$ is defined over $\R$,  $M_r^d$ is a semi-projective variety, with respect the $\R^\times$ action by scaling the Higgs field.

\begin{prop}
If $X$ is semi-projective over $\R$ then
$$P_t^{vir} (X) = P_t (X).$$
\end{prop}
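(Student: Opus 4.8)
The plan is to reduce the statement to the known case of smooth projective varieties by exploiting the $\C^\times$-action. The starting point is that both $P_t^{vir}$ and the topological $P_t$ are already known to agree on classes of smooth projective real varieties (by McCrory--Parusi\'nski), so the task is to propagate this agreement along the stratification coming from the Bialynicki-Birula decomposition associated to the $\R^\times$-action on $X$.

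First I would recall the structure theory of semi-projective varieties. Over $\C$, a semi-projective variety $Y$ decomposes as a disjoint union of locally closed subvarieties $Y = \bigsqcup_i U_i$, where $U_i$ is an affine-space bundle over a connected component $F_i$ of the fixed locus $Y^{\C^\times}$, the fibre being the attracting set of that component. The fixed locus $Y^{\C^\times}$ is proper, hence (being also smooth) projective. Crucially, when $X$ is defined over $\R$ and the action is defined over $\R$, all of this descends: $X^{\R^\times}$ is a smooth projective real variety, its components $F_i$ are defined over $\R$, the attracting strata $U_i$ are defined over $\R$, and $U_i \to F_i$ is a Zariski-locally-trivial affine bundle of some rank $d_i$ over $\R$. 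I would want to cite \cite{HRV13} for the complex picture and note that the Bialynicki-Birula construction is functorial enough to be Galois-equivariant, so it is defined over $\R$.

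Next, in the Grothendieck ring we get $[X] = \sum_i [U_i] = \sum_i \mathbb{L}^{d_i} [F_i]$ in $K_0(Var_\R)$, because an affine bundle of rank $d_i$ has class $\mathbb{L}^{d_i}$ times the base (this uses Zariski-local triviality, which holds for Bialynicki-Birula strata). Applying the motivic measure gives
\[
P_t^{vir}(X) = \sum_i (-t)^{d_i}\, P_t^{vir}(F_i) = \sum_i (-t)^{d_i}\, P_t(F_i),
\]
the last equality because each $F_i$ is smooth and projective over $\R$. On the topological side, $X(\R) = \bigsqcup_i U_i(\R)$ and $U_i(\R) \to F_i(\R)$ is a real affine bundle of rank $d_i$, i.e.\ (topologically) an $\R^{d_i}$-bundle; since $H^*_{cpt}$ of an $\R^{d_i}$-bundle over a compact base is that of the base shifted by $d_i$ (a Thom-isomorphism / fibre-bundle compactly-supported cohomology statement), and since compactly supported Betti numbers are additive over a decomposition into locally closed pieces, we get $P_t(X(\R)) = \sum_i (-t)^{d_i} P_t(F_i(\R)) = \sum_i (-t)^{d_i} P_t(F_i)$. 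Comparing the two displayed sums yields $P_t^{vir}(X) = P_t(X)$.

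The main obstacle, and the step I would be most careful about, is the descent of the Bialynicki-Birula decomposition to $\R$ and the Zariski-local triviality of the attracting strata in the real (rather than complex) setting --- this is what legitimizes both the identity $[U_i] = \mathbb{L}^{d_i}[F_i]$ in $K_0(Var_\R)$ and the statement that $U_i(\R) \to F_i(\R)$ is a genuine real vector-space bundle rather than merely a complex one. A secondary technical point is that $X$ is only quasi-projective, so one should confirm that compactly supported $\Z_2$-cohomology is additive over the locally closed stratification $X(\R) = \bigsqcup_i U_i(\R)$ (this is the long exact sequence of a closed/open pair, applied inductively, and is standard). Once these are in place the rest is the bookkeeping above, and no genuinely new computation is needed.
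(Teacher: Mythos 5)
There is a genuine gap at the decisive step on the topological side. You assert that compactly supported $\Z_2$-Betti numbers are ``additive over a decomposition into locally closed pieces,'' justified by ``the long exact sequence of a closed/open pair, applied inductively.'' That long exact sequence only gives additivity of Euler characteristics and \emph{inequalities} for Betti numbers; the connecting homomorphisms need not vanish. Indeed, additivity of the actual (as opposed to virtual) Poincar\'e polynomial is simply false for real varieties: for $\A^1 = \{0\} \sqcup \mathbb{G}_m$ over $\R$ one has $P_t(\A^1) = -t$, while $P_t(\{0\}) + P_t(\mathbb{G}_m) = 1 - 2t$, since $\mathbb{G}_m(\R)$ is two open intervals. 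If your ``standard'' step were true, $P_t$ itself would be a motivic measure, the proposition would hold for every real variety, and McCrory--Parusi\'nski's \emph{virtual} Betti numbers would be unnecessary. So the equality $P_t(X) = \sum_i P_t(U_i)$ for the Bialynicki-Birula strata of the real points is precisely the non-trivial content of the proposition, and your argument leaves it unproved.

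The paper closes exactly this gap by a different input: it invokes a theorem essentially due to Duistermaat \cite{D} (see also Biss--Guillemin--Holm \cite{BGH}) asserting that for the real locus of such a variety the Morse--Bott/Bialynicki-Birula stratification is \emph{perfect} with $\Z_2$ coefficients, i.e.\ the $\Z_2$-Morse inequalities are equalities on the fixed locus of the anti-holomorphic involution, which yields $P_t(X) = \sum_i P_t(U_i)$; combined with the fact that $U_i(\R)$ retracts onto $F_i(\R)$ with relative dimension $d_i$ (so $P_t(U_i) = (-t)^{d_i}P_t(F_i)$ by Poincar\'e duality, playing the role of your Thom isomorphism), the two sides match. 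Your motivic computation $P_t^{vir}(X) = \sum_i (-t)^{d_i}P_t(F_i)$ agrees with the paper's and is fine, as is your (correct, if briefly argued) concern about descent of the BB decomposition to $\R$. To repair the proposal you must replace the claimed additivity by the Duistermaat/Biss--Guillemin--Holm tightness result, or reprove that degeneration directly.
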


\begin{proof}

Under these conditions $ X^{\C^\times} = \coprod_{i \in I} F_i$  where the $F_i$ are non-singular, projective over $\R$, and we have a Bialycki-Birula stratification $ X = \bigcup_{i \in I} U_i$ where $$U_i(\C) :=  \{x \in X(\C) | lim_{\lambda \mapsto 0} \lambda x  \in F_i \} ,$$ and each $U_i$ is a locally closed subvarity of $X$, isomorphic to a vector bundle of some rank $d_i$ over $F_i$. It follows that

$$ P_t^{vir}(X) = \sum_{i\in I} P_t^{vir}(U_i) =   \sum_{i\in I} P_t^{vir}( F_i) P_t^{vir}(\mathbb{A}^{d_i}) =  \sum_{i\in I} P_t(F_i) (-t)^{d_i}.$$

On the other hand by a result essentially due to Duistermaat \cite{D}  (see also \cite{BGH}), we know that $ P_t( X) = \sum_i  P_t (U_i) $.  Since $U_i(\R)$ deformation retracts onto $F_i(\R)$ and has relative dimension $d_i$,  by Poincar\'e duality $P_t(U_i)=P_t(F_i) (-t)^{d_i}$ completing the proof.
\end{proof}

\section{Specializations}

\subsection{Maximality}

As a first application, we recover a result of Fu \cite{F}.  For a topological space $M$ and field $\mathbb{F}$, denote by $\beta_*(M;\mathbb{F}) = \sum_i \beta_i(M;\mathbb{F})$ the sum $\mathbb{F}$ of Betti numbers for $M$.  For a real variety $X$, the Smith-Thom inequality implies that  $\beta_*(X(\C);\Z_2 ) \geq \beta_*(X(\R); \Z_2)$, and $X$ is called maximal when equality holds. For example, a real curve $\Sigma$ of genus $g$ is maximal only if $\Sigma(\R)$ has $g+1$ components. 

In case, $X = M_r^d$,  $H^*(M_r^d(\C);\Z)$ is known to be torsion free by (\cite{GS}). Applying the universal coefficient theorem, $\beta_*( M_r^d(\C);\Q) = \beta_*(M_r^d(\C); \Z_2)$ so, by (\ref{Schiffresult})
$$ \beta_*( M_n^r(\C); \Z_2) = \mathcal{A}_{g,r}(1,-1,-1,...,-1). $$
On the other hand, applying Theorem \ref{mainthm} we deduce
$$ \beta_*(M_n^r(\R); \Z_2) = \mathcal{A}_{g,r}(1,\sqrt{-1},...,\sqrt{-1},-1,...,-1) $$
where $(g-b)$-entries equal $\sqrt{-1}$. If $\Sigma$ is maximal, then  $g-b =0$, so  $\beta_*( M_n^r(\C); \Z_2) = \beta_*(M_n^r(\R); \Z_2)$, so $M_n^r$ is maximal.

\subsection{Hodge polynomial specializations}

The Hodge structure of $M_n^d(\C)$ is pure, so Schiffmann's formula (\ref{Schiffresult}) can be refined to give the Hodge polynomial

$$ H_{u,v}(M_n^d) :=  \sum (-1)^{i+j}h^{i,j}(M_n^d) u^i v^j  =   \mathcal{A}_{g,r}(uv, u,u,...,u),$$
where $h^{i,j}(M_n^d) = \dim H^{(i,j)}_{cpt}(M_n^d(\C))$ are the (compactly supported) Hodge numbers.  If the real curve $\Sigma$ is maximal, then setting $u=t$ and $v = -1$ we obtain the identity
$$ H_{t,-1}(M_n^d)  =  P_t( M_n^d), $$
which means that the real variety $M_n^d$ is \emph{Hodge expressive} in the sense of Brugalle-Schaffhauser when $\Sigma$ is maximal.

At the other extreme,  if $\Sigma(\R)$ has only one path component so that $b=0$ in Theorem \ref{mainthm}, then we have

$$ H_{t^{\frac{1}{2}},-t^{\frac{1}{2}}} (M_n^d)  = P_t( M_n^d).  $$

\section{Average Betti numbers}\label{Average Betti numbers}

Our goal in this section is to prove the following somewhat surprising result.

\begin{prop}\label{avgbet}
Suppose that $\Sigma$ is a smooth projective, geometrically connected curve of genus $g$ defined over $\R$,  and $\Sigma(\R)$ has $b+1 >0$ connected components.
Then the Poincar\'e polynomial $P_t( M_r^d)$ is divisible by $2^b (1-t)^g$.
\end{prop}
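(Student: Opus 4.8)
The plan is to track how the factor $P_t(\Pic^0(\Sigma)) = 2^b(1-t)^g$ enters through the evaluation homomorphism $ev_\Sigma$ and the plethystic formula for $A_{g,r}$.  First I would recall from Section \ref{The Grothendieck Ring} that $\Pic^0(\Sigma)$ is an abelian variety over $\R$ of dimension $g$, and that $P_t(\Pic^0(\Sigma)) = P_t^{vir}([\Pic^0(\Sigma)])$; under the substitution prescribed by the Corollary, $ev_\Sigma$ sends the formal zeta numerator $\prod_{i=1}^g(1-\alpha_i z)(1-\alpha_i^{-1}qz)$ to the evaluated polynomial whose value at $z=1$ is, up to sign, exactly $2^b(1-t)^g$.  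More precisely, setting $q=-t$, $\alpha_i = -t^{1/2}$ for $g-b$ values and $\alpha_i=-1$ for $b$ values, the product $\prod_i(1-\alpha_i)(1-\alpha_i^{-1}q)$ becomes $(1+t^{1/2})^{g-b}(1+t^{1/2})^{g-b}(1+1)^b(1+t)^b = (1+t^{1/2})^{2(g-b)}(1-t)^b 2^b$ — I would double-check the precise exponents — and I want to extract from this the factor $2^b(1-t)^g$ after accounting for the full contribution.

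The key structural input is Mellit's formula (\ref{Plethlog}): $\sum_r H_{g,r}T^r = (z-1)(1-q)\operatorname{Log}\sum_\mu T^{|\mu|}\mathcal{H}_\mu$, together with $A_{g,r}(q,\alpha) = H_{g,r}(q,1,\alpha)$.  The plan is to show that each $\mathcal{H}_\mu$, as a function of the $\alpha_i$, is divisible in an appropriate sense by the ``numerator of the zeta function'', hence so is each $H_{g,r}$ after the plethystic log is taken (the plethystic logarithm preserves divisibility by such a symmetric factor because Log is built from the $\alpha_i$-symmetric power operations $p_n$, and the relevant factor is itself a product over a Galois-stable set).  Concretely, inspecting $\mathcal{H}_\mu$ one sees the numerator $\prod_{i=1}^g(z^{a+1}-\alpha_i q^{l})(z^a - \alpha_i^{-1}q^{l+1})$ over each box; specializing $z\to 1$ at the end and tracking the box $\square$ with $a(\square)=l(\square)=0$ (always present when $\mu\neq\emptyset$) gives a factor $\prod_i(1-\alpha_i)(1-\alpha_i^{-1}q)$, which is precisely $ev_\Sigma$-related to $P(1)$, the numerator of the zeta function evaluated at $1$.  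Under our substitution this specializes to a nonzero multiple of $2^b(1-t)^g$; the remaining task is to argue this factor survives the $\operatorname{Log}$, the multiplication by $(z-1)(1-q)$, and the final evaluation $z\mapsto 1$ without being cancelled.

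For the survival-through-$\operatorname{Log}$ step, I would work in the ring $R_g$ and observe that $f := \sum_\mu T^{|\mu|}\mathcal{H}_\mu = 1 + (\text{higher order in }T)$, and that every coefficient of $T^r$ for $r\geq 1$ is divisible by $\Phi := \prod_{i=1}^g(1-\alpha_i)(1-\alpha_i^{-1}q)\big|_{z=1}$ as a Laurent polynomial in $q,\alpha$ — this requires checking that the $a=l=0$ box factor is a common factor of $\mathcal{H}_\mu$ for all $\mu$ of size $\geq 1$ (true since that box always occurs), and that the denominators don't reintroduce it.  Then $\operatorname{Log}(f) = \sum_{n\geq 1}\frac{\mu(n)}{n}\psi_n(\log f)$-type formula shows each $T^r$-coefficient of $\operatorname{Log}(f)$ is also $\Phi$-divisible, since $\Phi$ is fixed by the Adams/power operations $\psi_n$ (it is a norm from a Galois extension, equivalently $ev_\Sigma(\Phi)$ is $P(1)$).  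Multiplying by $(z-1)(1-q)$ and setting $z=1$ preserves divisibility, so $A_{g,r}$ evaluated at our point is divisible by $ev_\Sigma(\Phi)$-image $= \pm 2^b(1-t)^g$.  Finally, applying $P_t^{vir}$ to the Corollary's identity and noting $P_t^{vir}$ is a ring homomorphism, the factor $2^b(1-t)^g$ passes to $P_t(M_r^d)$, and since $P_t(M_r^d)\in\Z[t]$ we may divide.  I expect the main obstacle to be the careful bookkeeping showing $\Phi$ is genuinely a common factor of all $\mathcal{H}_\mu$ (handling the $z\to 1$ limit of the rational function $\mathcal{H}_\mu$, where numerator and denominator both vanish, so one must compute the limit and confirm the $\alpha$-factor is not consumed) and that the plethystic $\operatorname{Log}$ does not destroy this divisibility — i.e., making precise the claim that $\operatorname{Log}$ commutes with reduction modulo the ideal $(\Phi)$.
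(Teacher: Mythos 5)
Your overall route is the same as the paper's: extract from the $a(\square)=l(\square)=0$ box of every non-empty $\mu$ the factor $\prod_{i=1}^g(z-\alpha_i)(1-\alpha_i^{-1}q)$ in the numerator of $\mathcal{H}_\mu$, show this divisibility survives the plethystic $\operatorname{Log}$, specialize $z=1$ and then substitute $q=-t$, $\alpha_i=-t^{1/2}$ (for $g-b$ of them) and $\alpha_i=-1$ (for $b$ of them) to produce $2^b(1-t)^g$. Two points need repair, one minor and one substantive. The minor one: your justification of the survival-through-$\operatorname{Log}$ step is wrong as stated, because $\Phi=\prod_i(1-\alpha_i)(1-\alpha_i^{-1}q)$ is \emph{not} fixed by the Adams operations ($\psi_n$ sends $\alpha_i\mapsto\alpha_i^n$, $q\mapsto q^n$), nor is it a norm in a sense that would make it $\psi_n$-invariant. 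What is true, and suffices, is that $\Phi$ divides $\psi_n(\Phi)$, since $1-x$ divides $1-x^n$; with that substitution your argument via $\operatorname{Log}(f)=\sum_n\tfrac{\mu(n)}{n}\psi_n(\log f)$ does give divisibility (with rational coefficients) in the ring localized at the $q,z$-denominators, which is what the paper extracts from the combinatorics of $\operatorname{Log}$ (\cite{HRV08}, Theorem 3.5.2). Relatedly, you should carry the $z$-dependent divisor $\prod_i(z-\alpha_i)(1-\alpha_i^{-1}q)$ through the whole argument and only set $z=1$ at the end, using Mellit's theorem that $H_{g,r}$ is a Laurent polynomial; your $\Phi$, with $z=1$ already substituted, is not literally a factor of the $T$-coefficients, which still depend on $z$.

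The substantive gap is the last step, ``since $P_t(M_r^d)\in\Z[t]$ we may divide.'' Everything you have proven up to that point is divisibility over $\Q$ (the $1/n$ and $1/k$ denominators of $\log$ and $\operatorname{Log}$ are unavoidable there), and divisibility of an integer polynomial by the constant $2^b$ is vacuous in $\Q[t]$; knowing $P_t(M_r^d)\in\Z[t]$ and $P_t(M_r^d)/(2^b(1-t)^g)\in\Q[t^{1/2}]$ does \emph{not} imply the quotient has integer coefficients. Yet the factor $2^b$ is exactly the nontrivial content of the proposition (the average of the Poincar\'e polynomials of the $2^b$ components having integer coefficients). The paper closes this by an integrality argument \emph{before} substituting: since $A_{g,r}=H_{g,r}(q,1,\alpha)$ lies in $\Z[q,\alpha_1^{\pm1},\dots,\alpha_g^{\pm1}]$ and $\prod_i(1-\alpha_i)(1-\alpha_i^{-1}q)$ is primitive, Gauss' Lemma upgrades the $\Q$-divisibility to divisibility in $\Z[q,\alpha_1^{\pm1},\dots,\alpha_g^{\pm1}]$, and only then is the specialization performed, giving $2^b(1-t)^g$ times an element of $\Z[t^{\pm 1/2}]$, hence of $\Z[t]$. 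You need this step (or an equivalent integrality argument) for the claimed divisibility by $2^b$.
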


\begin{proof}
Recall (\ref{Plethlog}):
\begin{equation}
 \sum_{r=1}^\infty H_{g,r} T^r = (z-1)(1-q) \operatorname{Log} \sum_{\mu \in \mathcal{P}} T^{|\mu|} \mathcal{H}_\mu.
\end{equation}

Every non-empty Young diagram $\mu$ includes at least one box with both arm and leg length zero, so $\mathcal{H}_\mu$ contains a factor of $ \prod_{i=1}^g (z - \alpha_i )(1 - \alpha_i^{-1} q)$ in its numerator. Using the combinatorics of the plethystic logarithm  (see for example  \cite{HRV08} Theorem 3.5.2.), it follows that every $H_{g,r}$ is divisible by $ \prod_{i=1}^g (z - \alpha_i )(1 - \alpha_i^{-1} q)$ in the localized ring $S^{-1} \Q[ q, z, \alpha_1^{\pm 1},...,\alpha_g^{\pm1}] $ where $S$ is the multiplicative set generated 
by  $\{  (z^{a+1} - q^{l} ),  (z^{a}-q^{l+1})|  a,b \geq 0\}$. We know
$$A_{g,r} (q, \alpha_1,..., \alpha_g) = H_{g,r}(q,1,\alpha_1,...,\alpha_g) \in \Z[q, \alpha_1^{\pm 1},..., \alpha_g^{\pm 1} ]$$ 
so by Gauss' Lemma $\prod_{i=1}^g (1-\alpha_i) (1-\alpha_i^{-1}q)$ divides $  A_{g,r}$ in $ \Z[q, \alpha_1^{\pm 1},..., \alpha_g^{\pm 1} ]$ .  Therefore by Theorem \ref{mainthm},   $P_t( M_r^d) = A_{g,r}(-t, -t^{\frac{1}{2}},...,-1,...) $  is divisible by   
$$2^b(1-t)^g = (1+t^{\frac{1}{2}})^{g-b}  (1-t^{\frac{1}{2}})^{g-b} (1+1)^b (1-t)^b $$ 
 in $\Z[t^{\pm \frac{1}{2}}]$, hence also in $ \Z[t]$. 
 \end{proof}

When the rank $r$ is odd, Proposition \ref{avgbet} can be proven using more elementary reasoning.  Choose a real line bundle  $L$ over $\Sigma$ of degree $d$, and let $M_r^L$ be the moduli space of stable Higgs bundles of rank $r$ with determinant $L$.  The morphism  $M_r^L \times Pic(\Sigma)^0 \rightarrow M_r^d$ sending the Higgs bundle $(E, \phi)$ to $(E \otimes L, \phi \otimes Id_L)$ descends to an isomorphism $ M_r^L \times_{\Gamma_r} Pic^0(\Sigma)  $ where $\Gamma_r \leq Pic^0(\Sigma)$ is the $r$-torsion subgroup scheme.

When $r$ is odd, this determines a homeomorphism  
\begin{equation}  \label{fixdetmap}
 M_r^L(\R) \times_{\Gamma_r(\R)}  Pic^0(\Sigma)(\R) \cong  M_r^d(\R). 
 \end{equation}
 
Here $M_r^L(\R)$ is connected, $Pic^0(\Sigma)(\R) \cong \Z_2^b \times (S^1)^g$ as a Lie group, and $\Gamma_r$ lies in the identity component of $Pic^0(\Sigma)(\R)$, which implies that $M_r^d(\R)$ has $2^b$ pairwise isomorphic connected components. Since $|\Gamma_r|$ and $|\Z_2|$ are relatively prime, 
\begin{align*}
 H^*( M_r^d(\R), \Z_2) & \cong  \left( H^*(  M_r^L(\R);\Z_2 ) \otimes  H^*(Pic^0(\Sigma)(\R);\Z_2) \right)^{\Gamma_r(\R)} \\ & \cong   H^*(  M_r^L(\R) ;\Z_2)^{\Gamma_r(\R)} \otimes  H^*(Pic^0(\Sigma)(\R);\Z_2),
 \end{align*}
so $P_t( M_r^d)$ must be divisible by $P_t( Pic^0(\Sigma)) =  2^b (1-t)^g$.

This argument fails when $r$ is even. In this case $\Gamma_r(\R)$ intersects every connected component of $ Pic^0(\R)$, so (\ref{fixdetmap}) maps to only one of the  $2^b$ components of $M_r^d(\R)$. Indeed, the connected components of $M_r^d(\R)$ have different Poincar\'e polynomials in general.  For example (see \cite{B} \S 5.2), when $g=b=2$, $M_2^1(\R)$ has four components, three of which have Poincar\'e polynomial   $t^5 (1-t)^2 (5t^3 - 7t^2 + 3t - 1)$    and one of which has Poincar\'e polynomial  $ t^5 (1-t)^2 (t^3 - 3t^2 + 3t - 1)$. Adding these together gives  $4 t^5 (1-t)^2( 4t^3-6t^2+3t-1) $. A priori, it is not at all obvious that the average value of the Poincar\'e polynomials of the connected components should have integer coefficients.

\ignore{

We remark that

$r=2$ $g=2$, $b=0,1,2$

$t^3 + t^2 + 2t + 2$

$2(t^2 + 2t^2 + 4t + 3)$

$4(t^3 + 3t^2 + 6t + 4)$

$r=2$ $g=3$, $b=0,1,2,3$

$t^6 + t^5 + 2t^4 + 5t^3 + 6t^2 + 6t + 3$

$2(t^6 + 2t^5 + 4t^4 + 8t^3 + 12t^2 + 12t + 5)$

$4(t^6 + 3t^5 + 7t^4 + 14t^3 + 22t^2 + 21t + 8)$

$6(t^6 + 4t^5 + 11t^4 + 24t^3 + 39t^2 + 36t + 13)$

$r=3$ $g=2$, $b=0,1,2$

$$ P_{-t}(M_r^d)/ (t+t^2)^2 =    $$

$t^8 + t^7 + 3t^6 + 6t^5 + 9t^4 + 16t^3 + 19t^2 + 16t + 6$

$2(t^8 + 2t^7 + 6t^6 + 13t^5 + 24t^4 + 41t^3 + 50t^2 + 39t + 13)$

$4(t^8 + 3t^7 + 10t^6 + 25t^5 + 54t^4 + 97t^3 + 121t^2 + 90t + 28)$

$r=3$, $g=3$, $b=0,1,2,3$

$$ P_{-t}(M_r^d)/ (t+t^2)^3 =   \left( \right)  $$

\begin{equation}
\begin{split}
\frac{P_{t}(M_2^1)}{2^b(t+t^2)^g} &= \Big( \frac{(z^2+t^{\frac{1}{2}})^{g-b} (z-t^{\frac{1}{2}})^{g-b} (z^2+1)^b(z-t)^b}{(z^2-1)(z+t)} + \frac{(z-t^{\frac{3}{2}})^{g-b} (1+t^{\frac{3}{2}})^{g-b} (z-t)^b(1+t^2)^b}{(z+t)(1-t^2)}\\
&  - \frac{(z+t^{\frac{1}{2}})^{g-b} (1-t^{\frac{1}{2}})^{g-b} (z+1)^b(1-t)^b}{2(z-1)(1+t)} - \frac{(z-t^{\frac{1}{2}})^{g-b} (1+t^{\frac{1}{2}})^{g-b} (z-1)^b(1+t)^b}{2(z+1)(1-t)}  \Big)|_{z=1}
\end{split}
\end{equation}

}


\begin{thebibliography}{9}

\bibitem[B]{B} Baird, Thomas John. ``Symmetric products of a real curve and the moduli space of Higgs bundles." Journal of Geometry and Physics 126 (2018): 7-21.


\bibitem[BGH]{BGH} Biss, Daniel, Victor W. Guillemin, and Tara S. Holm. ``The mod 2 cohomology of fixed point sets of anti-symplectic involutions." Advances in Mathematics 185.2 (2004): 370-399.


\bibitem[BS]{BS} Brugall\'e, Erwan, and Florent Schaffhauser. ``Maximality of moduli spaces of vector bundles on curves." \'Epijournal de G\'eom\'etrie Alg\'ebrique 6 (2023).

\bibitem[D]{D} Duistermaat, J. J. ``Convexity and tightness for restrictions of Hamiltonian functions to fixed point sets of an antisymplectic involution." Transactions of the American Mathematical Society 275.1 (1983): 417-429.


\bibitem[E]{E} Ekedahl, Torsten. ``The Grothendieck group of algebraic stacks." Perspectives on Four Decades of Algebraic Geometry, Volume 1: In Memory of Alberto Collino. Cham: Springer Nature Switzerland, 2024. 233-261.

\bibitem[FSS]{FSS} Fedorov, Roman, Alexander Soibelman, and Yan Soibelman. ``Motivic classes of moduli of Higgs bundles and moduli of bundles with connections." arXiv:1705.04890 (2017).

\bibitem[F]{F} Fu, Lie. ``Maximal real varieties from moduli constructions." Moduli 2 (2025): e8.

\bibitem[GHS]{GHS} Garcia-Prada, Oscar, Jochen Heinloth, and Alexander HW Schmitt. ``On the motives of moduli of chains and Higgs bundles." Journal of the European Mathematical Society 16.12 (2014): 2617-2668.

\bibitem[GS]{GS} Groechenig, Michael, and Shiyu Shen. "Complex $ K $-theory of moduli spaces of Higgs bundles." Journal of the European Mathematical Society (2025).


\bibitem[HRV08]{HRV08} Hausel, Tam\'as, and Fernando Rodriguez-Villegas. ``Mixed Hodge polynomials of character varieties: With an appendix by Nicholas M. Katz." Inventiones mathematicae 174.3 (2008): 555-624.

\bibitem[HRV13]{HRV13} Hausel, Tam\'as, and Fernando Rodriguez-Villegas. ``Cohomology of large semiprojective hyperk\"ahler varieties." arXiv:1309.4914 (2013).

\bibitem[K]{K} Kapranov, Mikhail. ``The elliptic curve in the S-duality theory and Eisenstein series for Kac-Moody groups." arXiv preprint math/0001005 (2000).


\bibitem[Mac]{Mac} Macdonald, Ian G. ``The Poincar\'e polynomial of a symmetric product." Mathematical Proceedings of the Cambridge Philosophical Society. Vol. 58. No. 4. Cambridge University Press, 1962.

\bibitem[MP]{MP} McCrory, Clint, and Adam Parusinski. ``Virtual Betti numbers of real algebraic varieties." Comptes Rendus. Mathematique 336.9 (2003): 763-768.

\bibitem[M]{M} Mellit, Anton. ``Poincar\'e polynomials of moduli spaces of Higgs bundles and character varieties (no punctures)." Inventiones mathematicae 221.1 (2020): 301-327.


\bibitem[S]{S}  Schiffmann, Olivier. ``Indecomposable vector bundles and stable Higgs bundles over smooth projective curves." Annals of Mathematics (2016): 297-362.

\end{thebibliography}
\end{document}